\documentclass{article}
\usepackage[T1]{fontenc}
\RequirePackage[OT1]{fontenc}
\usepackage{amsmath,amsfonts,amssymb,amsthm,psfrag, color,wrapfig,float}
\usepackage{authblk}

\usepackage[francais,english]{babel}
\title{Mean-field limit versus small-noise limit for some interacting particle systems}
\author[1]{Samuel Herrmann}
\author[2]{Julian Tugaut}
\affil[1]{Institut de Math\'ematiques de Bourgogne (IMB) - UMR 5584, Universit\'e de Bourgogne, B.P. 47
870, 21078 Dijon Cedex, France}
\affil[2]{Universit\'e Jean Monnet, Institut Camille Jordan, 23, rue du docteur Paul Michelon,
CS 82301,
42023 Saint-\'Etienne Cedex 2,
France.}

\begin{document}
\date{July 4, 2014}

\newcommand{\bRb}{\mathbb{R}}
\newcommand{\bCb}{\mathbb{C}}
\newcommand{\bEb}{\mathbb{E}}
\newcommand{\bKb}{\mathbb{K}}
\newcommand{\bQb}{\mathbb{Q}}
\newcommand{\bFb}{\mathbb{F}}
\newcommand{\bGb}{\mathbb{G}}
\newcommand{\bNb}{\mathbb{N}}
\newcommand{\bZb}{\mathbb{Z}}

\newcommand{\deriv}{\stackrel{\mbox{\bf\Large{}$\cdot$\normalsize{}}}}
\newcommand{\dederiv}{\stackrel{\mbox{\bf\Large{}$\cdot\cdot$\normalsize{}}}}

\theoremstyle{plain} \newtheorem{thm}{Theorem}[section]
\theoremstyle{plain} \newtheorem{prop}[thm]{Property}
\theoremstyle{plain} \newtheorem{props}[thm]{Properties}
\theoremstyle{plain} \newtheorem{ex}[thm]{Example}
\theoremstyle{plain} \newtheorem{contrex}[thm]{Coounterexample}
\theoremstyle{plain} \newtheorem{cor}[thm]{Corollary}
\theoremstyle{plain} \newtheorem{hyp}[thm]{Hypothesis}
\theoremstyle{plain} \newtheorem{hyps}[thm]{Hypotheses}
\theoremstyle{plain} \newtheorem{lem}[thm]{Lemma}
\theoremstyle{plain} \newtheorem{rem}[thm]{Remark}
\theoremstyle{plain} \newtheorem{nota}[thm]{Notation}
\theoremstyle{plain} \newtheorem{defn}[thm]{Definition}

\newcommand{\cRc}{\mathcal{R}}
\newcommand{\cCc}{\mathcal{C}}
\newcommand{\cEc}{\mathcal{E}}
\newcommand{\cKc}{\mathcal{K}}
\newcommand{\cQc}{\mathcal{Q}}
\newcommand{\cFc}{\mathcal{F}}
\newcommand{\cGc}{\mathcal{G}}
\newcommand{\cNc}{\mathcal{N}}
\newcommand{\cZc}{\mathcal{Z}}
\newcommand{\cOc}{\mathcal{O}}
\newcommand{\cSc}{\mathcal{S}}
\newcommand{\cAc}{\mathcal{A}}
\newcommand{\cBc}{\mathcal{B}}
\newcommand{\cIc}{\mathcal{I}}
\newcommand{\cDc}{\mathcal{D}}
\newcommand{\cLc}{\mathcal{L}}
\newcommand{\cHc}{\mathcal{H}}

\newcommand{\Ima}{{\rm Im}}
\newcommand{\Rea}{{\rm Re}}
\newcommand{\gaga}{\left|\left|}
\newcommand{\drdr}{\right|\right|}
\newcommand{\lra}{\left\langle}
\newcommand{\rra}{\right\rangle}

\newcommand{\bal}{\begin{align}}
\newcommand{\eal}{\end{align}}
\newcommand{\beq}{\begin{equation}}
\newcommand{\eeq}{\end{equation}}

\newcommand{\ba}{\begin{align*}}
\newcommand{\be}{\begin{equation*}}
\newcommand{\ee}{\end{equation*}}

\newcommand{\EE}{\mathbb{E}}
\newcommand{\PP}{\mathbb{P}}

\newcommand{\sepa}{\left|\right.}

\newcommand{\crg}{[\![}
\newcommand{\crd}{]\!]}

\newcommand{\sgn}{{\rm Sign}}
\newcommand{\vari}{{\rm Var}}
\newcommand{\cov}{{\rm Cov}}
\newcommand{\poin}[1]{\dot{#1}}
\newcommand{\norm}[1]{\Vert #1\Vert}

\maketitle
\begin{abstract}
In the nonlinear diffusion framework, stochastic processes of McKean-Vlasov type play an important role. In some cases they correspond to processes attracted by their own probability distribution: the so-called self-stabilizing processes. Such diffusions can be obtained by taking the hydrodymamic limit in a huge system of linear diffusions in interaction. In both cases, for the linear and the nonlinear processes, small-noise asymptotics have been emphasized by specific  large deviation phenomenons. The natural question, therefore, is: is it possible to interchange the mean-field limit with the small-noise limit ? The aim here is to consider this question by proving that the rate function of the first particle in a mean-field system converges to the rate function of the hydrodynamic limit as the number of particles becomes large. 
\end{abstract}
\medskip

{\bf Key words and phrases:} Inversion of limits ; Large deviations ; Interacting particle systems ; Hydrodynamic limit ; Nonlinear  diffusions ; small-noise limit \par\medskip

{\bf 2000 AMS subject classifications:} Primary: 46G05, 60H10 ;\\ Secondary: 60F10, 60J60\par\medskip

\section{Introduction}

In the stochastic convergence framework, the large deviation theory plays an essential role for describing the rate at which the probability of certain rare events decays. Each convergence result therefore leads to find the large deviation rate associated with. In suitable cases, the knowledge of the so-called \emph{large deviation principle (LDP)} even permits to obtain informations about the convergence itself (see the central limit theorem \cite{Bryc}). 

This paper is concerned with the convergence of continuous stochastic processes defined as small random perturbations of dynamical systems. In the classical diffusion case, the stochastic process converges in the small-noise limit to the deterministic solution of the dynamical system and the large deviation theory developed by Freidlin and Wentzell \cite{FW98} emphasizes the behaviour of the rare event probabilities. More recently, Herrmann, Imkeller and Peithmann \cite{HIP} studied the large deviation phenomenon associated with the McKean-Vlasov process, a particular nonlinear diffusion which is attracted by its own law (the so-called self-stabilizing effect). This process appears for instance in the probabilistic interpretation of the granular media equation. They presented the explicit expression of the rate function $J_\infty$ and the Kramers' rate which is related to the time needed by the diffusion to exit a given bounded domain.  

The aim of this paper is to better understand the link between the large deviation principle of the nonlinear diffusion and the classical theory developed by Freidlin and Wentzell. More precisely, the McKean-Vlasov equation describes the behaviour of one  particle in a huge system of particles in interaction, as a result of the hydrodynamic limit in a mean-field system. The natural question, therefore, is to emphasize the link between the rate function (or entropy function) $J_\infty$ of the nonlinear diffusion and the Freidlin-Wentzell rate function $J_N$ associated to one particle in a mean-field system of size $N$. We prefer to use functional analysis tools rather to develop the probabilistic interpretation of the corresponding equations. 

The material is organized as follows: first we discuss and recall different notions associated with the large deviation theory. Secondly we present the model and point out the link between nonlinear diffusion and high dimensional classical diffusions: the so-called mean-field effect. The third section will be devoted to the main result: the convergence of the rate functions $J_N\to J_\infty$ as $N$ becomes large. Finally, we present some immediate consequences and a generalization result.
\subsection{A large deviation principle}
Let us introduce the large deviation theory using some simple arguments. We consider a probability space $\left(\Omega,\mathcal{F},\PP\right)$ and $\left(X_k\right)_{k\in\bNb^*}$ a sequence of independent and identically distributed random variables. This sequence is concerned with several classical convergence results: the strong law of large numbers points out that the arithmetic average 
\(
\overline{X}_n:=\frac{1}{n}\sum_{k=1}^n X_k\,
\)
converges almost surely to the mean $\EE[X_1]$ as $n$ goes to infinity. The Central Limit Theorem goes further providing the distribution around this limiting value. Indeed, the random variable
\(
\sqrt{n}\left(\overline{X}_n-\EE[X_1]\right)
\)
converges in distribution to the centered gaussian law with variance ${\rm Var}(X_1)$. Let us note that we do not specify the hypotheses required for these two results to occur. The idea of the large deviations is to go even further estimating the probability of rare events: typically, the probability for the empirical mean $\overline{X}_n$ to be far away from $\mathbb{E}[X_1]$ or the probability that the empirical measure  $\frac{1}{n}\sum_{k=1}^n\delta_{X_k}$ is far from  $\mathbb{P}_{X_1}$, the probability distribution of $X_1$.

In order to measure how small the probability of a rare event is, it is convenient to describe the distribution of the vector $(X_1,\ldots,X_n)$ and to prove that it is concentrated on a small set of typical values with high probability (see \cite{DZ} for precise statements). Let us  illustrate this important feature by an example. We assume that $X_1$ is a finite valued random variable  $\#X_1(\Omega)=d>0$. Without loss of generality, we set $X_1(\Omega)=M:=\left\{1,\ldots,d\right\}$ and denote $p_i:=\PP(X_1=i)$ for any $1\leq i\leq d$. $M^n$ therefore corresponds to the family of sequences (messages) of length $n$. The main interest in the study of rare events is to define the entropy of the so-called \emph{typical messages} and the following surprising remark holds: the probability of being a typical message goes to $1$ as $n$ goes to infinity despite the number of such messages is negligible with respect to the number $d^n=\#M^n$ of all possible sequences.

To make this remark precise, let us define the entropy of the probability distribution $\left(p_1,\ldots,p_d\right)$ as follows
\be
H(p):=H\left(p_1,\ldots,p_d\right):=-\sum_{k=1}^d p_k\log(p_k)\,.
\ee
For a given positive real $\epsilon$, we introduce the set of \emph{typical messages}:
\be
\mathcal{T}_n^\epsilon:=\left\{\left(i_1,\ldots,i_n\right)\in M^n\,\,:\,\,e^{-n\left(H(p)+\epsilon\right)}\leq p_{i_1}\times\ldots\times p_{i_n}\leq e^{-n\left(H(p)-\epsilon\right)}\right\}\,.
\ee
Using the law of large numbers applied to the sequence of independent  
and identically distributed random variables $(\log p_{X_i})_{1\le i\le n}$, the following properties yields  
\[
\lim_{n\to\infty}\PP\left(\left(X_1,\cdots,X_n\right)\in \mathcal{T}_n^\epsilon\right)=1\quad \mbox{and}\quad
\# \mathcal{T}_n^\epsilon\leq e^{n\left(H(p)+\epsilon\right)}\,.
\]
These two results occur for any $\epsilon>0$; in particular, if $H(p)<\log(d)$, we highlight a set of messages $\mathcal{T}_n^\epsilon$ whose probability is close to $1$ for large $n$ whereas its size is small compared to the whole space: $\# \mathcal{T}_n^\epsilon=o\left(d^n\right)=o\left(\#M^n\right)$. In other words, the trajectory $\left(X_1,\cdots,X_n\right)$ has a small probability to be outside a small part of the phase space $M^n$. This discussion is based on the explicit expression of the entropy function which permits to describe the probability of paths deviating from the typical message ones (large deviation phenomenon).\\

In this paper, the framework concerns continuous time processes depending on a parameter $\sigma$ and we describe the behaviour of this family in the small-parameter limit. Even if the state space is infinite, this idea is similar to the above discussion, we need to find out a rate function (entropy) which describes the probability of a trajectory to be far away from typical paths. 

 Let us consider a family of continuous stochastic processes $X^\sigma:=\left(X_t^\sigma\right)_{t\in[0,T]}$ with $T<\infty$. In the following, the family of processes $\left(X^\sigma\right)_{\sigma>0}$ is said to satisfy a \emph{large deviation principle} if there exists a lower semi-continuous mapping (called \emph{rate function}) $I$ from $\mathcal{C}\left([0,T];\bRb^d\right)$ to $\bRb_+$ such that
\be
\limsup_{\sigma\to0}\frac{\sigma^2}{2}\log\left[\PP\left\{X^\sigma\in\mathbb{F}\right\}\right]\leq-\inf_{\varphi\in\bFb}I(\varphi)
\ee
for any closed subset $\mathbb{F}\subset\mathcal{C}\left([0,T];\bRb^d\right)$ equipped with the uniform topology and
\be
\liminf_{\sigma\to0}\frac{\sigma^2}{2}\log\left[\PP\left\{X^\sigma\in\mathbb{G}\right\}\right]\geq-\inf_{\varphi\in\bGb}I(\varphi)
\ee
for any open subset $\mathbb{G}\subset\mathcal{C}\left([0,T];\bRb^d\right)$. $I$ is a \emph{good} rate function if its level sets are compact subsets of $\mathcal{C}\left([0,T];\bRb^d\right)$. \\
We now focus our attention to stochastic differential equations driven by a Brownian motion. Schilder's theorem deals with the LDP of $X^\sigma:=\left(\sigma B_t\right)_{t\in[0,T]}$, where $B$ is a standard $d$-dimensional Wiener process (see Theorem 5.2.3 in \cite{DZ}). The associated good rate function is given by 
\be
I_0(\varphi):=\frac{1}{4}\int_0^T \norm{\poin{\varphi}(t)}^2dt\,,
\ee
if $\varphi$ belongs to the set of absolutely continuous functions starting in $0$, denoted by $\cHc_0$. If $\varphi\notin\cHc_0$, we set $I_0(\varphi):=+\infty$. Here $\norm{\cdot}$ stands for the euclidean norm in $\bRb^d$. The study elaborated by Schilder permits to go further in the description of LDP for diffusions as presented by Freidlin and Wentzell. If $X^\sigma$ satisfies the stochastic differential equation:
\be
X^\sigma_t=x+\sigma B_t-\int_0^tb(s,X_s^\sigma)ds\,,
\ee
where the drift term $b(t,x)$ is a continuous function with respect to the time variable and locally Lipschitz with respect to the space variable; then the family $\left(X^\sigma\right)_{\sigma>0}$ admits a LDP with the good rate function
\be
I_b(\varphi):=\frac{1}{4}\int_0^T\norm{\poin{\varphi}(t)+b(t,\varphi(t))}^2\, dt
\ee
for $\varphi\in\cHc_x$ (the set of absolutely continuous functions starting in $x$). For $\varphi\notin\cHc_x$, $I_b(\varphi):=+\infty$. Let us focus our attention to the \emph{typical paths } of such a diffusion. In fact, in the particular case of a deterministic equation
\begin{equation}
\label{eq:boucle}
\Psi_t(x)=x-\int_0^t b(s,\Psi_s(x))\,ds
\end{equation}
admitting a unique solution, the diffusion $X^\sigma$ starting in $x$ converges in probability towards the deterministic trajectory $\Psi(x)$ in the small-noise limit. The Freidlin-Wentzell LDP estimates the rate of convergence: introducing \[
\bFb:=\left\{\varphi\in\cCc\left([0,T];\bRb^d\right):\ \Vert\varphi-\Psi(x)\Vert_\infty\geq\delta\right\},
\]
where $\Vert\cdot\Vert_\infty$ stands for the uniform norm, we obtain
\be
\limsup_{\sigma\to0}\frac{\sigma^2}{2}\log\PP\left(X^\sigma\in\bFb\right)
\leq-\inf_{\varphi\in\bFb}I_b(\varphi)<0\,.
\ee
Let us finally note that the precise description of the deviation phenomenon permits to deal with the small-noise asymptotics of exit times $\tau_\cDc$ from a domain of attraction $\cDc$. Namely if the drift term of the diffusion is in the so-called gradient case, that is $b(t,x)=\nabla V(x)$, if moreover $V$ reaches a local minimum for $x=a$ and $\cDc$ is a bounded domain of attraction associated to $a$, then a Kramers' type law can be observed. A weak version of this result is the following asymptotic expression:
\[
\lim_{\sigma\to 0}
\frac{\sigma^2}{2}\log \EE[\tau_\cDc]=\inf_{T>0}
\inf_{\substack{\varphi(0)=a,\\ \varphi(T)\in\partial\cDc}}I_{\nabla V}(\varphi)=\inf_{y\in\partial \cDc}V(y)-V(a)
\]
In other words, not only is the rate function a key tool for the description of the diffusion deviation from typical trajectories (linked to a study on a fixed time interval $[0,T]$), but it is also involved in the description of exit times from a domain (a study developed on the whole time axis).

The aim of our paper, therefore, is to describe some nice properties of the rate function, not in the classical diffusion case just described above, but for self-stabilizing diffusions of the McKean-Vlasov type, diffusions attracted by their own law.
Let us finally note that for other applications of large deviations to communication, optic and biology, we refer the reader to \cite{DZ}.

\subsection{The self-stabilizing model}
From now on, we restrict the study to the McKean-Vlasov model: for $x\in\mathbb{R}^d$, the process satisfies the following stochastic differential equation:
\begin{equation}
\label{eq:intro:init}
\left\{\begin{array}{l}
X_t^\sigma=x+\sigma B_t-\int_0^t\nabla W_s^\sigma\left(X_s^\sigma\right)ds\\[5pt]
W_t^\sigma:=V+F\ast u_t^\sigma.      
\end{array}\right.
\end{equation}
The $*$ symbol stands for the convolution product and $u_t^\sigma$ denotes the density of the probability distribution $\mathbb{P}_{X^\sigma_t}$. Since the own law of the process plays an important role in the structure of the drift term, this equation is nonlinear, in the sense of McKean, see for instance \cite{McKean,McKean1966}. Three terms contribute to the infinitesimal dynamics. \begin{itemize}\item The first one is the noise generated by the $d$-dimensional Brownian motion $(B_t,\, t\ge 0)$. 
\item The second force is related to the attraction between one trajectory $t\mapsto X_t^\sigma(\omega_0)$, $\omega_0\in\Omega$, and the whole set of trajectories. Indeed, we observe:
\be
\nabla F\ast u_t^\sigma\left(X_t^\sigma(\omega_0)\right)=\int_{\omega\in\Omega}\nabla F\left(X_t^\sigma(\omega_0)-X_t^\sigma(\omega)\right){\rm d}\mathbb{P}\left(\omega\right)\,.
\ee
Consequently, $F$ is called the \emph{interaction potential}. The interaction only depends on the difference $X^\sigma_t(\omega_0)-X_t^\sigma(\omega)$ and therefore can be associated to the convolution product. Let us note that other dependences have been studied namely the quantile case: the drift is then a continuous function of the quantile of the distribution $\mathbb{P}_{X_t^\sigma}$, see \cite{kolokol}.
\item The last term corresponds to the function $V$, the so-called \emph{confining potential}. The solution $X^\sigma_t$ roughly represents the motion of a Brownian particle living in a landscape $V$ and whose inertia is characterized by $F$. Therefore it is easy to imagine that the minimizers of the potential $V$ attract the diffusion if $F(0)=0$. 
\end{itemize}
Let us now present the hypotheses concerning the functions $F$ and $V$. The confining potential $V$ satisfies:
\begin{itemize}
\item[{\bf (V1)}]\emph{$V$ is a $\cCc^2$-continuous function.}
\item[{\bf (V2)}]\emph{$\nabla^2 \,V(x)\geq0$ for all $x\notin K$ where $K$ is a compact subset of $\mathbb{R}^d$.}
\end{itemize}
Combining (V1) and (V2) ensures the  existence of a solution to \eqref{eq:intro:init}. The interaction function satisfies:
\begin{itemize}
\item[{\bf (F1)}] \emph{There exists a function $G:\bRb_+\to\bRb_+$ such that $F(x)=G(\norm{x})$.}
\item[{\bf (F2)}] \emph{$G$ is an even polynomial function with $\deg(G)\geq2$ and $G(0)=0$.}
\item[{\bf (F3)}] \emph{The following asymptotic property holds  $\displaystyle\lim_{r\to+\infty}G(r)=+\infty$.}
\end{itemize}

Let us now complete the description of this McKean-Vlasov model by briefly recalling several already known results concerning \eqref{eq:intro:init}. 
\begin{itemize}
\item \emph{Probabilistic interpretation of PDEs.} In fact, the self-stabilizing diffusion corresponds to the probabilistic interpretation of the granular media equation. The probability density function of $X^\sigma_t$, starting at $x$, is represented by $(t,x)\mapsto u^\sigma_t(x)$ and  satisfies the following partial differential equation 
\beq
\label{granular}
\frac{\partial}{\partial t}u_t^\sigma={\rm div}\left\{\frac{\sigma^2}{2}\nabla_x u_t^\sigma+u_t^\sigma\left(V+F\ast u_t^\sigma\right)\right\}\,.
\eeq
This equation is strongly nonlinear since it contains a \emph{quadratic} term of the form $u_t^\sigma (F\ast u_t^\sigma)$. This link between the granular media equation \eqref{granular} and the McKean-Vlasov diffusion \eqref{eq:intro:init} permits to study the PDE by  probabilistic methods \cite{CGM,Malrieu2003,Funaki1984}.
\item \emph{Existence and uniqueness.}
The existence and the uniqueness of a strong solution $X^\sigma$ to \eqref{eq:intro:init} defined on $\mathbb{R}_+$ has been proven in \cite{HIP} (Theorem 2.13). Moreover the long-time asymptotic behaviour of the probability distribution $\mathbb{P}_{X^\sigma_t}$ has been studied in \cite{CGM,BRV} (for convex functions $V$) and in \cite{T2010,T2011e} for the non-convex case. In this second case, the key of the proofs essentially consists in using the results of \cite{HT1,HT2,HT3} about the non-uniqueness of the invariant probabilities (that means in particular that there exist several positive stationary solutions of the granular media equation \eqref{granular} which have a total mass equal to $1$).
\item \emph{Large deviation principle.} The noise intensity appearing in the equation \eqref{eq:intro:init} is parametrized by $\sigma$. The aim of the large deviation principle is to describe precisely the behaviour of the paths in the small-noise limit. For any $T>0$, we can prove, see \cite{HIP}, that the family of processes $\left(X^\sigma\right)_{\sigma>0}$  satisfies a large deviation principle with the following good rate function $J_\infty$:
\begin{equation}\label{eq:def:Jinf}
\displaystyle J_\infty(f):=
\frac{1}{4}\int_0^T\norm{\poin{f}(t)+\nabla V\left(f(t)\right)+\nabla F\left(f(t)-\Psi_\infty^x(t)\right)}^2\,dt,
\end{equation}
if $f\in\cHc_x$
and otherwise $J_\infty(f):=+\infty$. Here the function $\Psi_\infty^x$ is independent of $F$ and satisfies the following ordinary differential equation:
\begin{equation}
\label{Psi}
\Psi_\infty^x(t)=x-\int_0^t\nabla V\left(\Psi_\infty^x(s)\right)ds,\quad\,x\in\bRb^d\,.
\end{equation}
In other words, the diffusion process $(X^\sigma_t,\, t\ge 0)$ converges exponentially fast towards the deterministic solution $\Psi_\infty^x$ as $\sigma$ tends to $0$. The limit function for a classical diffusion $Y^\sigma_t$ defined by
\[
Y_t^\sigma=x+\sigma B_t-\int_0^t \nabla V(Y^\sigma_s)\,ds
\]
is exactly the same: the self-stabilizing phenomenon does not change the limit, it only changes the speed of convergence. Indeed the rate function $J_\infty$ clearly depends on $F$. If the function $F$ is convex, the trajectories of the McKean-Vlasov diffusion $X^\sigma$ are closer to $\Psi_\infty^x$ than the ones of the diffusion $Y^\sigma$.

Since the asymptotic behaviour has been described on a fixed-time interval $[0,T]$, the next step is  to describe the asymptotic behaviour on the whole time interval and namely the study of exit problems: the first time the self-stabilizing diffusion exits from a given bounded domain. This problem has already been solved if both $V$ and $F$ are uniformly strictly convex functions, see \cite{HIP,T2011f} by the use of large deviation technics. In \cite{T2011f}, the method is based on the exit problem for an associated mean-field system of particles.
\end{itemize}

\subsection{An interacting particle system}
The McKean-Vlasov diffusion $X^\sigma$, described in the previous section, corresponds to the movement of a particle in a continuous mean-field system in the so-called hydrodynamical limit, that is, as the number of particles tends to infinity. The mean-field system associated to the self-stabilizing process \eqref{eq:intro:init} is a $N$ dimensional random dynamical system $(X^{i,N,\sigma})_{1\le i\le N}\in\otimes^N\mathcal{C}([0,T];\mathbb{R}^d)$ satisfying
\begin{align}
\label{eq:intro:meanfield}
dX_t^{i,N,\sigma}=\sigma \,dB_t^i-\nabla V(X_t^{i,N,\sigma})\,dt
-\frac{1}{N}\sum_{j=1}^N\nabla F(X_t^{i,N,\sigma}-X_t^{j,N,\sigma})\,dt
\end{align}
and $X_0^{i,N,\sigma}:=x$. Here, $(B_t^i)_{t\in\mathbb{R}_+}$ stands for a family of $N$ independent $d$-dimensional Brownian motions. We also assume $B^1=B$, in other words, both diffusions $X^{1,N,\sigma}$ and $X^\sigma$ (see \eqref{eq:intro:init}) are defined with respect to the same Wiener process (this is possible due to the existence of a strong solution). The \emph{propagation of chaos} then permits to link  \eqref{eq:intro:init} and \eqref{eq:intro:meanfield}. It is essentially based on the following intuitive remark. The larger $N$ is, the less influence a given particle $X^{j,N,\sigma}$ has on the first particle $X^{1,N,\sigma}$. Consequently, it is reasonable to consider that the particles become less and less dependent as the number of particles becomes large. The empirical measure $\frac{1}{N}\sum_{j=1}^N\delta_{X_t^{j,N,\sigma}}$ therefore converges towards a measure $\mu_t^\sigma$ which corresponds to the own distribution of $X_t^{1,\infty,\sigma}$. If fact this law corresponds to $\mathbb{P}_{X^ \sigma_t}$. For a rigorous proof of this statement, see \cite{Sznitman,M1996}. It is also possible to adapt a coupling result developed for instance in \cite{BRTV} in order to obtain the following convergence:
\be
\lim_{N\to\infty}\EE\left\{\sup_{0\leq t\leq T}\norm{X_t^{1,N,\sigma}-X_t^\sigma}^2\right\}=0.
\ee
\emph{Large deviation principle.} For $N$ large, the diffusion $X^\sigma$ defined in \eqref{eq:intro:init} is close to the diffusion $X^{1,N,\sigma}$ defined in \eqref{eq:intro:meanfield}. Then it is of particular interest to know if these two diffusions have the same small-noise asymptotic behaviour. The large deviations associated with \eqref{eq:intro:meanfield} are quite classical since the system of particles is a Kolmogorov diffusion of the form
\begin{equation}\label{eq:kolmo}
dX^{i,N, \sigma}_t=\sigma dB_t^i-
N\times \nabla_{x_i}\Upsilon^N(X^{1,N,\sigma}_t,\ldots,X^{N,N,\sigma}_t)\,dt,\quad X^{i,N, \sigma}_0=x,
\end{equation}
with the following potential:
\begin{align*}
&\Upsilon^N(z_1,\cdots,z_N):=\int_{\mathbb{R}^d}V(x)\mu^N(dx)+\frac{1}{2}\int_{\mathbb{R}^d\times\mathbb{R}^d}F(x-y)\mu^N(dx)\mu^N(dy).
\end{align*}
Here $\mu^N:=\frac{1}{N}\sum_{j=1}^N\delta_{z_j}$.
This approach is directly linked to the particular form of the interaction function $F$ which only depends on the norm, see hypothesis (F1) and (F2).
The good rate function, associated with the uniform topology, is a functional defined by
\begin{equation*}
I^N(\Phi):=\frac{1}{4}\int_0^T\norm{\poin{\Phi}(t)+N\nabla\Upsilon^N\left(\Phi(t)\right)
}^2dt,
\end{equation*}
if $\Phi:[0,T]\to \left(\bRb^d\right)^N$ is an absolutely continuous function with the initial condition $\Phi(0)=\overline{x}:=(x,\ldots,x)$ and $I^N(\Phi):=+\infty$ otherwise. The rate function $I^N$ can therefore be rewritten in this way: if $\Phi:=\left(f_1,\cdots,f_N\right)$, we obtain
\begin{equation}
\label{IN}
I^N\left(\Phi\right)=\frac{1}{4}\sum_{i=1}^N\int_0^T\norm{\poin{f_i}(t)+\nabla V\left(f_i(t)\right)+\frac{1}{N}\sum_{k=1}^N\nabla F\left(f_i(t)-f_k(t)\right)}^2dt
\end{equation}
if $f_i\in\mathcal{H}_x$ for any $1\leq i\leq N$. If one function of the family $(f_i)_{1\le i\le N}$   does not belong to $\cHc_x$, then we set $I^N\left(f_1,\cdots,f_N\right):=+\infty$. Let us just note that this LDP leads to the description of the exit problem for the McKean-Vlasov system \cite{T2011f}.
Since a LDP holds for the whole particle system, a LDP in particular holds for the first particle $(X^{1,N,\sigma})$ with the good rate function $J_N$ obtained by projection:
\begin{equation}
\label{eq:def:JN}
J_N(f):=\inf_{f_2,\cdots,f_N\in\mathcal{H}_x}I^N\left(f,f_2,\cdots,f_N\right).
\end{equation}
Since $X^{1,N,\sigma}$ is close to the self-stabilizing process $X^\sigma$, solution of \eqref{eq:intro:init}, we aim to state that the functional $J_N$ converges towards $J_\infty$, the entropy function of the mean-field diffusion, as $N$ becomes large. In other words, is it possible to interchange the limiting operations concerning the asymptotic small noise $\sigma$ and the asymptotic large number of particles $N$ i.e. the hydrodynamic limit ?


\section{Convergence of the rate functions}
In this section, we emphasize the main result of this study. We prove that the large deviation rate function $J_N$ associated to the first particle in the huge McKean-Vlasov system of particles in interaction is close to the rate function of the self-stabilizing (nonlinear) diffusion. 
\begin{thm}\label{thm1}
Let $x\in\mathbb{R}^d$. Under Hypotheses {\rm (V1)--(V2)} and {\rm (F1)--(F3)}, the rate function $J_N$ defined by \eqref{eq:def:JN} converges towards $J_\infty$ defined by \eqref{eq:def:Jinf}, as $N$ tends to infinity. Let $f$ be an absolutely continuous function from $[0,T]$ to $\bRb^d$ such that $f(0)=x$, then
\begin{align*}
\lim_{N\to+\infty}\,J_N(f)=J_\infty(f).
\end{align*}
Moreover the convergence is uniform with respect to $f$ on any compact subset of $\mathcal{C}([0,T],\mathbb{R}^d)$ endowed with the uniform topology.
\end{thm}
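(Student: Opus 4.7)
I plan to prove $J_N(f)\to J_\infty(f)$ by establishing matching upper and lower bounds, and then to upgrade pointwise to uniform convergence via a compactness argument.

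\textbf{Upper bound.} For $\limsup_{N\to\infty} J_N(f)\le J_\infty(f)$, I would plug into $I^N$ the test configuration $\Phi^N:=(f,\Psi_\infty^x,\ldots,\Psi_\infty^x)$. Hypotheses (F1)--(F2) force $\nabla F(0)=0$ (because $G$ is an even polynomial vanishing at $0$), and $\Psi_\infty^x$ solves $\dot\Psi_\infty^x=-\nabla V(\Psi_\infty^x)$. After these simplifications the first-particle integrand collapses to
\[
\Bigl\Vert\dot f(t)+\nabla V(f(t))+\tfrac{N-1}{N}\nabla F\bigl(f(t)-\Psi_\infty^x(t)\bigr)\Bigr\Vert^2,
\]
whose time integral converges to $4J_\infty(f)$ by dominated convergence, while each of the remaining $N-1$ particles contributes $\frac{1}{4N^2}\int_0^T\|\nabla F(\Psi_\infty^x-f)\|^2\,dt$, for a total remainder of order $1/N$.

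\textbf{Lower bound.} This is the crux of the proof. Along any subsequence with $J_N(f)\le M<\infty$, pick near-minimizers $(f_2^N,\ldots,f_N^N)$ satisfying $I^N(f,f_2^N,\ldots,f_N^N)\le J_N(f)+1/N$, and set
\[
A_i^N(t):=\dot f_i^N(t)+\nabla V(f_i^N(t))+\frac{1}{N}\sum_{k=1}^N\nabla F\bigl(f_i^N(t)-f_k^N(t)\bigr).
\]
From $\sum_{i=1}^N\int_0^T\|A_i^N\|^2\,dt=4I^N\le 4(M+1)$ and a pigeonhole argument, for every $\varepsilon>0$ there is a \emph{good} set $\mathcal{G}^N\subset\{2,\ldots,N\}$ with $|\mathcal{G}^N|\ge(1-\varepsilon)N$ and $\int_0^T\|A_i^N\|^2\,dt\le 4(M+1)/(\varepsilon N)$ for $i\in\mathcal{G}^N$. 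The next step is to prove that $f_i^N\to\Psi_\infty^x$ uniformly over $i\in\mathcal{G}^N$. This I would do in two stages: first, derive a uniform $L^\infty$-bound on the whole family $(f_i^N)_i$, using the coercivity granted by (F3) together with the variational identity
\[
4I^N=\sum_{i=1}^N\|\dot f_i^N\|_{L^2}^2+2N\bigl[\Upsilon^N(\Phi^N(T))-V(x)\bigr]+N^2\sum_{i=1}^N\int_0^T\|\nabla_{x_i}\Upsilon^N(\Phi^N(t))\|^2\,dt,
\]
which controls kinetic energy once $\Upsilon^N$ is bounded below; second, for $i\in\mathcal{G}^N$, combine this bound with local Lipschitz estimates on $\nabla V$ and $\nabla F$ and a Gronwall inequality to conclude $\|f_i^N-\Psi_\infty^x\|_\infty\to 0$. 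Once this concentration holds, the empirical measure $\frac{1}{N}\sum_k\delta_{f_k^N(t)}$ converges to $\delta_{\Psi_\infty^x(t)}$ uniformly in $t$; continuity of $\nabla F$ and dominated convergence then yield
\[
\frac{1}{N}\sum_{k=1}^N\nabla F\bigl(f(t)-f_k^N(t)\bigr)\longrightarrow\nabla F\bigl(f(t)-\Psi_\infty^x(t)\bigr)\quad\text{in }L^2(0,T),
\]
so the first-particle term of $I^N$ is bounded below by $J_\infty(f)-o(1)$, as required.

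\textbf{Main obstacle and uniformity.} The principal difficulty lies in the uniform $L^\infty$-control of the good trajectories: since (V2) only provides convexity of $V$ outside a compact set, a naive contraction argument is unavailable, and the at most $\varepsilon N$ \emph{bad} particles can in principle perturb the interaction field $\frac{1}{N}\sum_k\nabla F(f_i^N-f_k^N)$ significantly. Extracting from the variational identity above enough control to rule this out is the essential technical step. For the uniform convergence on compact subsets of $\mathcal{C}([0,T],\mathbb{R}^d)$, the error terms in both bounds depend on $f$ only through quantities such as $\|f\|_\infty$ and $\|\dot f\|_{L^2}$, which remain uniformly controlled on any compact set intersected with a level set of the good rate function $J_\infty$; a standard equicontinuity argument then upgrades pointwise convergence to uniform convergence.
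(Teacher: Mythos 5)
Your upper bound is exactly the paper's Step 1 and is fine. Your lower bound, however, takes a genuinely different route from the paper and contains a gap that you yourself flag but do not close, and it is not a minor technicality: it is the point where the argument, as written, fails. The pigeonhole step gives you $(1-\varepsilon)N$ good indices with $\int_0^T\Vert A_i^N\Vert^2\,dt\le 4(M+1)/(\varepsilon N)$, and your variational identity (which is correct, and is essentially the paper's estimate of the cross terms $R_1,R_2$) yields only $\sum_i\Vert\dot f_i^N\Vert_{L^2}^2\le CN$. That bound permits individual bad particles with $\Vert\dot f_k^N\Vert_{L^2}^2$ of order $N$, hence $\Vert f_k^N\Vert_\infty$ of order $\sqrt N$. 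Since by (F2) the gradient $\nabla F$ grows like a polynomial of degree $\deg(G)-1\ge1$, the up-to-$\varepsilon N$ bad particles can contribute as much as $\varepsilon\,C\,N^{(\deg G-1)/2}$ to the mean-field force $\frac1N\sum_k\nabla F(f(t)-f_k^N(t))$ acting on the first particle, and to the force acting on each good particle. This is not $o(1)$ (it diverges once $\deg G\ge4$), so neither the Gronwall argument for $f_i^N\to\Psi_\infty^x$ on the good set nor the claimed $L^2$ convergence of the empirical force can be run without an additional idea. Nothing in the proposal supplies that idea; "extracting enough control" is precisely the missing step, not a routine one.

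The paper sidesteps this obstacle entirely by working with an exact minimizer rather than near-minimizers: it first shows (via the same coercivity you invoke, plus weak compactness in $\mathcal{H}_0^{N-1}$) that the infimum defining $J_N(f)$ is attained; it then writes the Euler--Lagrange system for the minimizer, proves through a duality/uniqueness argument that the associated quantities $\xi_j^*$ vanish for $j\ge2$, and deduces by symmetry that all companion trajectories coincide with a single function $\Psi_N^f$ solving $\dot\Psi_N^f=-\nabla V(\Psi_N^f)-\frac1N\nabla F(\Psi_N^f-f)$, $\Psi_N^f(0)=x$. This gives a closed-form expression for $J_N(f)$, from which both the limit and the uniformity on compacts follow by continuous dependence of the ODE solution on the parameter $1/N$. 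If you want to salvage your concentration strategy you would need an a priori $L^\infty$ bound on \emph{all} particles (not just most of them), uniform in $N$; otherwise I recommend switching to the variational/Euler--Lagrange route. Your final paragraph on uniformity also needs care: a compact subset of $\mathcal{C}([0,T],\mathbb{R}^d)$ for the uniform topology does not control $\Vert\dot f\Vert_{L^2}$, so the reduction to level sets of $J_\infty$ has to be made explicit (as in the paper's Corollary, one restricts to $\kappa_{2\alpha}$ where the explicit formula applies).
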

\begin{proof}
{\bf Step 1.} Let us first prove (easiest part) the upper-bound
\begin{equation}
\label{eq:limite}
\limsup_{N\to+\infty}J_N(f)\leq J_\infty(f)\,.
\end{equation}
By definition, $J_N(f)\leq I^N(f,f_2,\cdots,f_N)$ for any $f_2,\cdots,f_N\in\mathcal{H}_x$ where $I^N$ is defined by \eqref{IN}. Hence, we can choose $f_k:=\Psi_\infty^x$ for all $2\leq k\leq N$. Let us remind the reader that $\Psi_\infty^x$ is the solution of \eqref{Psi}. Thus we obtain:
\begin{align*}
J_N(f)&
\leq I^N(f,\Psi_\infty^x,\cdots,\Psi_\infty^x)\\
&\leq\frac{1}{4}\int_0^T\norm{ \poin{f}(t)+\nabla V(f(t))+\Big(1-\frac{1}{N}\Big)\nabla F\left(f(t)-\Psi_\infty^x(t)\right)}^2\,dt\\
&+\frac{N-1}{4}\int_0^T\norm{\poin{\Psi}_\infty^x(t)+\nabla V\left(\Psi_\infty^x(t)\right)+\frac{1}{N}\nabla F\left(\Psi_\infty^x(t)-f(t)\right)}^2\,dt\,.
\end{align*}
By definition of $\Psi_\infty^x$, we have $\poin{\Psi}_\infty^x+\nabla V\left(\Psi_\infty^x\right)=0$. The previous inequality yields:
\begin{align*}
J_N(f)&\leq\frac{1}{4}\int_0^T\norm{\poin{f}(t)+\nabla V(f(t))+\Big(1-\frac{1}{N}\Big)\nabla F\left(f(t)-\Psi_\infty^x(t)\right)}^2\,dt\\
&+\frac{1}{4N}\int_0^T\norm{\nabla F\left(\Psi_\infty^x(t)-f(t)\right)}^2\,dt.
\end{align*}
Taking the limit as $N$ goes to infinity in the previous inequality leads to the announced upper-bound \eqref{eq:limite}. Let us just note that, due to the local Lipschitz property of the interaction function $\nabla F$, this convergence is uniform with respect to $f$ on any compact set for the uniform topology. \\[5pt]
{\bf Step 2.} Let us focus our attention to the lower bound:
\begin{equation}\label{eq:lower}
\liminf_{N\to+\infty}J_N(f)\geq J_\infty(f)\,.
\end{equation}
{\bf Step 2.1} Let us recall that $J_N$ is defined as a minimum \eqref{eq:def:JN} and let us prove that it is reached: there exists $(f_2^*,\ldots,f_N^*)\in \cHc_x^{N-1}$ such that
 \[
J_N(f)=I^N(f,f_2^*,\ldots,f_N^*).
\]
If we consider a function $g\in \cHc_x$ then the function $\overline{g}$ defined by 
$\overline{g}(t):=g(t)-x$ for all $t\in[0,T]$ belongs to $\cHc_0$ which is an Hilbert space endowed with the usual norm $\Vert \overline{g}\Vert_H^2:=\int_0^T\norm{\poin{g}(t)}^2\,dt$. Let us introduce now the Hilbert space $\cHc_0^{N-1}$ with the norm $\norm{(g_2,\ldots,g_{N})}^2:=\sum_{k=2}^{N}\Vert g_k\Vert_H^2$. Due to the regularity of both the interaction potential $F$ and the confining potential $V$, it is quite simple to prove that
\[
(\overline{g}_2,\ldots,\overline{g}_{N})\mapsto I^N(f,g_2,\ldots,g_{N})
\]
is a continuous function in the Hilbert space $\cHc_0^{N-1}$ (the details are left to the reader).

Since $J_N(f)$ is the minimum, then for any $\epsilon>0$, there exist $f_2^\epsilon,\cdots,f_N^\epsilon$ belonging to $\cHc_x$ such that
\be
I^{N}\left(f,f_2^\epsilon,\cdots,f_N^\epsilon\right)\leq J_N\left(f\right)+\epsilon\,.
\ee
Let us consider the set $\mathcal{S}_f^x\subset\mathcal{H}_0^{N-1}$ of functions $(\overline{g}_2,\cdots,\overline{g}_N)$ satisfying 
\begin{equation}\label{eq:eq}
I^{N}(f,g_2,\cdots,g_N)\leq 2J_\infty \left(f\right). 
\end{equation}
By \eqref{eq:limite}, for $N$ large enough and $\epsilon$ small enough, we obtain that 
$(\overline{f_2^\epsilon},\cdots,\overline{f_N^\epsilon})\in \mathcal{S}_f^x$. Moreover let us prove that the subset  $\mathcal{S}_f^x$ is included in a closed ball of $\cHc_0^{N-1}$. Indeed the following inequality holds
\begin{align}
\label{eq:mino}
I^{N}\left(f,g_2,\cdots,g_N\right)&
\geq\frac{1}{4}\sum_{i=1}^N\int_0^T\norm{ \poin{g}_i(t)}^2\,dt+R_1(f,g,N)+R_2(f,g,N)
\end{align}
with
\begin{align*}
\left\{\begin{array}{l}
R_1(f,g,N):=\displaystyle\frac{1}{2}\sum_{i=1}^N\int_0^T\lra \poin{g}_i(t)\vert\nabla V\left(g_i(t)\right)\rra dt\\
R_2(f,g,N):=\displaystyle \frac{1}{2N}\sum_{i=1}^N\sum_{k=1}^N\int_0^T\lra \poin{g}_i(t)\vert\nabla F\left(g_i(t)-g_k(t)\right)\rra\, dt,\end{array}\right.
\end{align*}
and the convention $g_1:=f$. Here $\lra\cdot\vert\cdot\rra$ stands for the Euclidian scalar product in $\mathbb{R}^d$. 
We first observe that Hypothesis (F1) leads to
\begin{align*}
R_2(f,g,N)&=\frac{1}{4N}\sum_{i=1}^N\sum_{k=1}^N\int_0^T\lra \poin{g}_i(t)-\poin{g}_k(t)\vert\nabla F\left(g_i(t)-g_k(t)\right)\rra\, dt.\\
&=\frac{1}{4N}\sum_{i=1}^N\sum_{k=1}^N F(g_i(T)-g_k(T))\ge \frac{N}{4}\,\inf_{z\in\mathbb{R}^d}F(z).
\end{align*}
Due to the hypothesis on the interaction function $F$ the right hand side of the previous inequality is finite. With similar arguments, we get
\be 
R_1(f,g,N)=\frac{1}{2}\sum_{i=1}^N\Big(V(g_i(T))-V(x)\Big)\ge \frac{N}{2}\, \inf_{z\in\mathbb{R}^d}V(z)-\frac{N}{2}\, V(x).
\ee 
These two previous inequalities combined with \eqref{eq:mino} and \eqref{eq:eq} permit to prove the existence of a constant $C(N,x,f)$ only depending on $f$, $x$ and $N$ such that
\begin{align}\label{eq:bou}
\sum_{i=2}^N\int_0^T\norm{\poin{g}_i(t)}^2\,dt
\leq C(N,x,f).
\end{align}
We immediately deduce that the subset $\mathcal{S}_f^x$ is included in a closed ball of the Hilbert space $\cHc_0^{N-1}$. Since $(\overline{f_2^\epsilon},\ldots,\overline{f_N^\epsilon})\in \mathcal{S}_f^x$, it is possible to extract a subsequence $(\overline{f_2^{\epsilon_n}},\ldots,\overline{f_N^{\epsilon_n}})_{n\ge 0}$ which converges in the weak topology towards a limiting function $(\overline{f_2^{*}},\ldots,\overline{f_N^{*}})\in \cHc_0^{N-1}$. Finally, due to the continuity of the function $I_N$ we deduce that:
\[
I^N(f,f_2^*,\ldots,f_N^*)=\lim_{n\to\infty}I^N(f,
f_2^{\epsilon_n},\ldots,f_N^{\epsilon_n})\le J_N(f)+\lim_{n\to\infty}\epsilon_n=J_N(f).
\]
The minimum $J_N$ is then reached for $(f_2^*,\ldots,f_N^*)\in\cHc_x^{N-1}$.\\[5pt]
{\bf Step 2.2} In order to compute $J_N$, let us point out particular properties of the functions $(f_2^*,\ldots,f_N^*)$. Since the minimum is reached, we are going to compute the derivative of $I^N$ with respect to each coordinate. Since we restrict ourselves to the functional space $\mathcal{H}_x$, we take an absolutely continuous function $g\in \cHc_0$ and we consider the following limit
\begin{align*}
D_{2}I^N\left(f,f_2,\ldots,f_N\right)(g)
&:=\lim_{\delta\to 0}\frac{I^N(f,f_2+\delta g,f_3,\ldots,f_N)-I^N(f,f_2,\ldots,f_N)}{\delta}\,.
\end{align*}
That defines the derivative of $I^N$ with respect to the second argument. In a similar way, we can define $D_iI^N$ for any $2\le i\le N$. Let us compute explicitly $D_2I^N$. For any $1\le i\le N$, we set
\beq
\label{frasier}
\xi_i:=\poin{f}_i+\nabla V(f_i)+\frac{1}{N}\sum_{k=1}^N\nabla F(f_i-f_k)\,,
\eeq
with $f_1:=f$ by convention. So we note that
\begin{equation}
\label{eq:lient}
I^N(f,f_2,\ldots,f_N)=\frac{1}{4}\sum_{i=1}^N\int_0^T\norm{\xi_i(t)}^2\,dt.
\end{equation}
We now observe the derivative of the quantity $\xi_i$ with respect to the second function. In other words, introducing $f_2^\delta:=f_2+\delta g$, and defining $\xi_i^{2,\delta}$ like $\xi_i$ just by replacing $f_2$ by $f_2^\delta$ in \eqref{frasier}, we get
\be
\lim_{\delta\to 0}\frac{\xi_i^{2,\delta}-\xi_i}{\delta}=-\frac{1}{N}\,H(F)(f_i-f_2)g\,,\quad \mbox{for}\ i\neq 2,
\ee
and, for $i=2$:
\be
\lim_{\delta\to 0}\frac{\xi_2^{2,\delta}-\xi_2}{\delta}= \poin{g}+H(V)(f_2)g+\frac{1}{N}\sum_{\substack{1\le k \le N\\ k\neq 2}}H(F)(f_2-f_k)g\,.
\ee
Here $H(F)(x)$ represents the Hessian matrix of the function $F$ at the point $x\in\mathbb{R}^d$. From now on, we simplify the notation $D_2I^N(f,f_2,\ldots,f_N)(g)$ and replace it by $D_2I^N$. By \eqref{eq:lient} and the polarization identity, we obtain
\begin{align}\label{solgen}
D_2I^N&=\lim_{\delta\to 0}\frac{1}{4\delta}\,\sum_{i=1}^N\int_0^T \norm{\xi^{2,\delta}_i(t)}^2-\norm{\xi_i(t)}^2\, dt\nonumber \\
&=\lim_{\delta\to 0}\frac{1}{4\delta}\,\sum_{i=1}^N\int_0^T \Big(\norm{\xi^{2,\delta}_i(t)-\xi_i(t)}^2+2\langle\xi_i(t)\vert \xi^{2,\delta}_i(t)-\xi_i(t)\rangle\Big)\, dt\nonumber \\
&=-\frac{1}{2N}\sum_{i=1}^N\int_0^T
\langle \xi_i(t) \vert H(F)(f_i-f_2)g(t) \rangle\,dt\nonumber \\
&+\frac{1}{2}\int_0^T\langle \xi_2(t) \vert  \poin{g}(t)+\mathcal{R}^f_2 g(t)
\rangle  \,dt,
\end{align}
with
\begin{equation}
\label{eq:def:R}
\mathcal{R}^f_2 :=H(V)(f_2)+\frac{1}{N}\,\sum_{i=1}^N H(F)(f_2-f_i).
\end{equation}
Let us assume that $\xi_2$ is regular (let us say continuously differentiable), then by an integration by parts and since $g(0)=0$, we obtain
\ba
D_2I^N
=-\frac{1}{2}\int_0^T\langle \mathcal{E}_2^f(\xi_2,\ldots,\xi_N)(t)\vert g(t)\rangle dt+\frac{1}{2}\langle \xi_2(T)\vert g(T)\rangle,
\end{align*}
where the function $\mathcal{E}_2^f$ is defined by
\begin{align}\label{eq:def:erond}
\mathcal{E}_2^f(\xi_2,\cdots,\xi_N)(t)&:=\poin{\xi}_2(t)-\mathcal{R}^f_2 \xi_2(t)+\frac{1}{N}\sum_{i=1}^N H(F)(f_2-f_i)\xi_i(t)\,.
\end{align}
We proceed in the same way for $D_jI^N(f_1,f_2,\cdots,f_N)(g)$ for any $2\leq j\leq N$, just replacing $2$ by $j$ in \eqref{eq:def:R} and \eqref{eq:def:erond}.
Since the minimum $J_N(f)$ is reached, that is $J_N(f)=I^N(f,f_2^*,\ldots,f_N^*)$ for some functions $(f_2^*,\ldots,f_N^*)\in\cHc_x^{N-1}$ (see Step 2.1), the following expression vanishes: $D_jI^N(f,f_2^*,\ldots,f_N^*)(g)=0$ for $2\le j \le N$ and any function $g$, and therefore $(\xi_2^*,\ldots,\xi_N^*)$ is solution to the system
\begin{equation}
\label{olala}
\left\{\begin{array}{ll}\mathcal{E}_j^{f^*}(\xi_2^*,\ldots,\xi_N^*)=0,\\
\xi_j^*(T)=0\quad\mbox{for any}\,\,2\leq j\leq N.
\end{array}\right.
\end{equation}
where the functions $\mathcal{E}_j^{f^*}$ are defined like $\mathcal{E}_j^f$ in \eqref{eq:def:erond} (respectively $\xi_j^*$ like $\xi_j$ in \eqref{frasier}), we need just to replace $(f,f_2,\ldots,f_N)$ by $(f,f_1^*,\ldots,f_N^*)$.\\
In fact we do not know if $\xi_j^*$, $2\le j\le N$, are regular functions as assumed. We deduce therefore that $(\xi_2^*,\ldots,\xi_N^*)$ is a \emph{generalized solution} of the system \eqref{olala}.\\[5pt] 
{\bf Step 2.3} In Step 2.1, we proved that $(f_2^*,\ldots,f_N^*)$ minimizes the function $I^N$ and, by Step 2.2, that $(\xi_2^*,\ldots,\xi_N^*)$ which can be expressed as a function of $(f_2^*,\ldots,f_N^*)$ -- see \eqref{frasier} -- satisfies a particular system of differential equations, in a generalized sense, namely \eqref{olala}. Of course $(\xi_2^*(t),\ldots,\xi_N^*(t))=(0,\ldots,0)$ is a classical solution (and consequently a generalized solution) of \eqref{olala}. By uniqueness of the \emph{generalized solutions}, we shall obtain that 
\[
\xi_j^*(t)=0\quad \mbox{for a.e.}\quad 0\le t\le T\quad\mbox{and for}\quad 2\le j\le N.
\]
Let us prove this uniqueness property. Let us consider a function $h=(h_2,\ldots,h_N)$ belonging to $\otimes ^{N-1}\mathcal{C}([0,T],\mathbb{R}^d)$, then using the Cauchy-Lipschitz theorem (see for instance Theorem 3.1 in \cite{Hale}), there exists a $\mathcal{C}^1$-solution $(g_2,\ldots,g_N)$ of the following system of equations:
\begin{align}\label{adjoint}
\dot{g}_j(t)=-\mathcal{R}_j^{f^*}g_j(t)-\frac{1}{N}\sum_{k=1}^N H(F)(f_j^*-f_k^*)g_k(t)+h_{j}(t),\quad 2\le j\le N,
\end{align}
with the initial condition $g_j(0)=0$ for any $2\le j\le N$. Here $f_1^*$ stands for $f$ for notational convenience. In particular $g\in \mathcal{H}_0^{N-1}$.
Using results developed in Step 2.2, the function $I^N$ reaches its minimal value for the arguments $(f_2^*,\ldots,f_N^*)$ and consequently:
\[
\sum_{j=2}^N D_jI^N(f,f_2^*,\ldots,f_N^*)(g_j)=0,
\]
where $g_j$ are solutions of \eqref{adjoint}. Combining the expression \eqref{solgen} and \eqref{adjoint} leads to
\begin{equation}\label{eq:zero}
\sum_{j=2}^N\int_{0}^T\langle \xi_j^*(t)|h_{j}(t) \rangle\, dt=0,
\end{equation}
for any continuous functions $(h_j,\ 2\le j\le N)$. Since $f^*$ is in the function space $\mathcal{H}_x^{N-1}$ and since $\xi_j^*$ is related to $f^*$ by \eqref{frasier}, we know that $\xi_j$ is square integrable function. Using Carleson's theorem (see for instance Theorem 1.9 in \cite{Duo}) and \eqref{eq:zero} we deduce that $\xi_j^*(t)=0$ for a.e. $t\in[0,T]$ and for any $2\le j\le N$.\\[5pt]
{\bf Step 2.4} Using the definition of $\xi_j^*$, the previous step permits to obtain that, for any $2\le j\le N$,
\beq
\label{perceval}
\poin{f}_j^*(t)+\nabla V(f_j^*)(t)+\frac{1}{N}\sum_{k=1}^N\nabla F\left(f_j^*-f_k^*\right)(t)=0\quad \mbox{for a.e.} \ t\in[0,T],
\eeq
with the boundary condition $f^*_j(0)=x$.
Applying once again the arguments presented in Step 2.3 leads to the uniqueness of the solutions for \eqref{perceval}. Since the system is symmetric, we get the existence of a $\mathcal{C}^1$-function $\Psi_N^f$ satisfying
\[
f_2^*(t)=\ldots=f_N^*(t)=\Psi_N^f(t)\quad\mbox{for a.e.}\ t\in[0,T],
\]
and, on the time interval $[0,T]$,
\beq
\label{leodagan}
\poin{\Psi}_N^f(t)+\nabla V(\Psi_N^f)(t)+\frac{1}{N}\nabla F(\Psi_N^f-f)(t)=0,\quad\mbox{with}\  \Psi_N^f(0)=x.
\eeq
We just recall that $f_1^*=f$ for notational convenience. Using the definition of $J_N(f)$, we get
\begin{align}\label{eq:JN}
J_N(f)&=I^N(f,f^*_2,\ldots,f^*_N)\nonumber\\
&=\int_0^T\Vert\poin{f}(t)+\nabla V(f(t))+\Big(1-\frac{1}{N}\Big)\nabla F(f(t)-\Psi_N^f(t))\Vert^2\,dt\,.
\end{align}
Since the first order differential equation \eqref{leodagan} can be associated to a Lipschitz constant which does not depend on the parameter $1/N$, the unique solution $\Psi_N^f (t)$ depends continuously on both the parameter $1/N$ and the time variable (see, for instance, Theorem 3.2 p. 20 in \cite{Hale}). Here we consider that $(t,1/N)$ belongs to the compact set $[0,T]\times[0,1]$, consequently $(t,1/N)\mapsto\Psi_N^f(t)$ is uniformly continuous. Moreover, $\Psi_\infty^f=\Psi_\infty^x$. Hence \eqref{eq:JN} implies
\[
\lim_{N\to\infty}J_N(f)=J_\infty(f),
\]
where $J_\infty$ is defined by \eqref{eq:def:Jinf} and \eqref{Psi}. The proof of the lower-bound \eqref{eq:lower} is then achieved. It is quite easy to prove that the convergence is uniform with respect to the function $f$ on any compact set for the uniform topology. 
\end{proof}
We could provide  
the precise rate of convergence because we give better than a simple  
lower-bound in the previous proof. Indeed, we have obtained the exact expression of $J_N(f)$.

\section{Immediate consequences and further results}
Theorem \ref{thm1} emphasizes the link between the large deviation rate function of the self-stabilizing diffusion \eqref{eq:intro:init} and the rate function associated to the mean-field system \eqref{eq:intro:meanfield}. This result is of particular interest since one of the diffusion is nonlinear whereas the second one is linear and therefore well-known. In this section, we present a coupling result concerning these two diffusions and extend Theorem \ref{thm1} to a more general nonlinear model.

Let us first recall the large deviation principle already presented in the introduction. A family of continuous stochastic processes  $\left(X^\sigma\right)_{\sigma>0}$ is said to satisfy a \emph{large deviation principle} for the uniform topology with good rate function $I$ if the level sets of $I$ are compact subsets of $\mathcal{C}\left([0,T];\bRb^d\right)$ and if
\be
\limsup_{\sigma\to0}\frac{\sigma^2}{2}\log\PP\left(X^\sigma\in\mathbb{F}\right)\leq-\inf_{\varphi\in\bFb}I(\varphi)
\ee
for any closed subset $\mathbb{F}\subset\mathcal{C}\left([0,T];\bRb^d\right)$ equipped with the uniform topology and
\be
\liminf_{\sigma\to0}\frac{\sigma^2}{2}\log\PP\left(X^\sigma\in\mathbb{G}\right)\geq-\inf_{\varphi\in\bGb}I(\varphi)
\ee
for any open subset $\mathbb{G}\subset\mathcal{C}\left([0,T];\bRb^d\right)$. 

Theorem \ref{thm1} ensures the convergence of $J_N(f)$ to $J_\infty(f)$ for any continuous function $f$. In order to point out the large deviation principle, we need to precise this convergence on open and closed subsets of continuous functions.
\begin{cor}\label{cor:inf-sup}
For any open or closed subset $\mathbb{O}\subset \mathcal{C}([0,T];\mathbb{R}^d)$ (for the uniform topology) the following convergence holds
\begin{equation}\label{eq:cor}
\lim_{N\to\infty}\inf_{\varphi\in\mathbb{O}}J_N(\varphi)=\inf_{\varphi\in\mathbb{O}}J_\infty(\varphi).
\end{equation}
\end{cor}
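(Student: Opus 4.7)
The corollary follows by combining a straightforward upper bound with a compactness-based lower bound, both consequences of the uniform convergence $J_N\to J_\infty$ on compact subsets of $\mathcal{C}([0,T],\mathbb{R}^d)$ proved in Theorem~\ref{thm1}. The upper bound $\limsup_N \inf_{\varphi\in\mathbb{O}} J_N(\varphi) \le \inf_{\varphi\in\mathbb{O}} J_\infty(\varphi)$ holds for every subset $\mathbb{O}$, open or closed: fix $\varphi\in\mathbb{O}$ with $J_\infty(\varphi)<+\infty$ (the other case is trivial); by Theorem~\ref{thm1}, $J_N(\varphi)\to J_\infty(\varphi)$, hence $\limsup_N \inf_{\varphi\in\mathbb{O}} J_N(\varphi)\le J_\infty(\varphi)$, and minimizing over $\varphi\in\mathbb{O}$ yields the bound. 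The openness or closedness of $\mathbb{O}$ will therefore only come into play -- and, as we shall see, will not actually need to be distinguished -- in the matching lower bound.

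\textbf{Lower bound via tightness of near-minimizers.} For each $N$, pick $\varphi_N\in\mathbb{O}$ with $J_N(\varphi_N)\le \inf_{\varphi\in\mathbb{O}} J_N(\varphi)+1/N$ and extract a subsequence along which $J_N(\varphi_N)\to L:=\liminf_N \inf_{\varphi\in\mathbb{O}} J_N(\varphi)$; the case $L=+\infty$ being trivial, assume $L<+\infty$. Granted the tightness claim below, extract a further uniformly convergent sub-subsequence $\varphi_{N_k}\to \varphi^{*}$. The set $K:=\{\varphi_{N_k}:k\ge 0\}\cup\{\varphi^{*}\}\subset \mathcal{C}([0,T],\mathbb{R}^d)$ is compact, so $\sup_K|J_{N_k}-J_\infty|\to 0$ by Theorem~\ref{thm1}, whence $J_\infty(\varphi_{N_k})=J_{N_k}(\varphi_{N_k})+o(1)\to L$. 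Since $\varphi_{N_k}\in\mathbb{O}$,
\begin{equation*}
\inf_{\varphi\in\mathbb{O}} J_\infty(\varphi) \le J_\infty(\varphi_{N_k}) \longrightarrow L,
\end{equation*}
proving the lower bound. The argument treats open and closed $\mathbb{O}$ on the same footing, since only the membership $\varphi_{N_k}\in\mathbb{O}$ is used, never that the limit $\varphi^{*}$ belong to $\mathbb{O}$.

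\textbf{Main obstacle: tightness of near-minimizers.} The substantive task is to prove that $\{\varphi_N\}_N$ is relatively compact in $(\mathcal{C}([0,T],\mathbb{R}^d),\|\cdot\|_\infty)$. The plan is to extract this from the explicit expression \eqref{eq:JN} obtained in the proof of Theorem~\ref{thm1}, by expanding the square and reproducing the manipulation of Step~2.1 that produced the lower bounds on $R_1$ and $R_2$. Using the symmetry of $F$ from (F1)--(F2) and the lower bounds on $V$ and $F$ furnished by (V1)--(V2) and (F3), this should yield a uniform $L^2$-bound on $\dot\varphi_N$; combined with the common initial condition $\varphi_N(0)=x$, Cauchy--Schwarz then gives uniform $1/2$-H\"older regularity and hence uniform boundedness on $[0,T]$, and Arzel\`a--Ascoli closes the compactness argument. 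The delicate point is that \eqref{eq:JN} involves the auxiliary trajectory $\Psi_N^{\varphi_N}$ solving the ODE~\eqref{leodagan}, whose dependence on $\varphi_N$ must be controlled; I expect this to be handled by a Gronwall-type estimate exploiting the fact that the Lipschitz constant of \eqref{leodagan} is bounded independently of $N$, as already emphasized at the end of Step~2.4 of the proof of Theorem~\ref{thm1}.
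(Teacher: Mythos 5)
Your skeleton for the lower bound --- pick near-minimizers $\varphi_N\in\mathbb{O}$ of $J_N$, extract a uniformly convergent subsequence, and apply the uniform convergence of Theorem~\ref{thm1} on the compact set $\{\varphi_{N_k}\}\cup\{\varphi^*\}$ --- is sound, and the upper bound is correct. The genuine gap is the relative compactness of $\{\varphi_N\}$, which you rightly flag as the substantive task but leave as a sketch, and the sketch does not close as described. The estimates of Step~2.1 of Theorem~\ref{thm1} bound $\sum_{i\ge 2}\int_0^T\Vert\dot g_i(t)\Vert^2dt$ by a constant $C(N,x,f)$ that depends on the \emph{fixed} first coordinate $f$; here the first coordinate is $\varphi_N$ itself, precisely the object you need to bound, so those estimates do not transfer. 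Expanding \eqref{eq:JN} directly, the cross term $\int_0^T\langle\dot\varphi_N(t)\vert\nabla F(\varphi_N(t)-\Psi_N^{\varphi_N}(t))\rangle\,dt$ leaves, after integrating the exact part $F(\varphi_N(T)-\Psi_N^{\varphi_N}(T))-F(0)$, a remainder $-\int_0^T\langle\nabla V(\Psi_N^{\varphi_N})\vert\nabla F(\varphi_N-\Psi_N^{\varphi_N})\rangle\,dt$. Since $\deg G\ge 2$, $\nabla F(z)$ may grow like $\Vert z\Vert^{2\deg G-1}$, i.e.\ superlinearly, and with $\varphi_N$ a priori unbounded this remainder is not absorbed by a fixed fraction of $\int_0^T\Vert\dot\varphi_N\Vert^2dt$ plus a constant. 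A uniform $L^2$ bound on $\dot\varphi_N$ therefore does not follow from the expansion alone; you would need an independent a priori bound on $\sup_t\Vert\varphi_N(t)\Vert$ over sublevel sets of $J_N$, uniformly in $N$, which is essentially the statement you are trying to prove.

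The paper takes a shorter route that avoids this issue: since $J_\infty$ is a \emph{good} rate function (compact level sets, established in \cite{HIP}), with $\alpha:=\inf_{\mathbb{O}}J_\infty<\infty$ one has $\inf_{\mathbb{O}}J_\infty=\inf_{\mathbb{O}\cap\kappa_{2\alpha}}J_\infty$ for the compact level set $\kappa_{2\alpha}$, and the uniform convergence of Theorem~\ref{thm1} is then applied on that fixed compact set; no compactness of $J_N$-near-minimizers is invoked. (To be fair, controlling $\inf_{\mathbb{O}}J_N$ from below by contributions outside $\kappa_{2\alpha}$ is also treated lightly there, so you have put your finger on a real difficulty.) The cleanest repair of your argument is to import the paper's idea: obtain the compactness from the level sets of $J_\infty$, which is already known, rather than re-deriving it from the explicit formula for $J_N$.
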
 
\begin{proof}
Let $\mathbb{O}$ be a non empty set. If  \(
\displaystyle \inf_{\varphi\in\mathbb{O}}J_\infty(\varphi)=+\infty
\)
then $\mathbb{O}\subset\mathcal{H}^c$ where $\mathcal{H}^c$ is the complementary of the set of absolutely continuous functions. Therefore $J_N(\varphi)=+\infty$ for any function $\varphi\in \mathbb{O}$ and \eqref{eq:cor} is obviously satisfied. Let us assume now that \(
\displaystyle \inf_{\varphi\in\mathbb{O}}J_\infty(\varphi)=\alpha<\infty.
\) Denoting by \[
\kappa_\lambda=\{\varphi\in\mathcal{C}([0,T],\mathbb{R}^d):\ J_\infty(\varphi)\le \lambda\}
\]
which is a compact set since $J_\infty$ is a \emph{good} rate function (see \cite{HIP}), we obtain
\[
\inf_{\varphi\in\mathbb{O}}J_\infty(\varphi)=
\inf_{\varphi\in\mathbb{O}\,\cap \kappa_{2\alpha}}J_\infty(\varphi).
\]
In order to conclude the proof, it suffices to apply the convergence of $J_N$ towards $J_\infty$ developed in Theorem \ref{thm1}, which is in fact uniform with respect to $\varphi$ on any compact subset, in particular on the subset $\kappa_{2\alpha}$.
\end{proof}
This first corollary concerns the rate functions. Let us now focus our attention on the associated processes. A nice coupling property can be obtained describing the link between the self-stabilizing diffusion $(X^\sigma_t,\,t\ge 0)$ defined by \eqref{eq:intro:init} and the linear diffusion $(X^{1,N,\sigma}_t,\, t\ge 0)$ defined by \eqref{eq:intro:meanfield}. Since there exists a unique strong solution to each of these two equations, we can construct $X^\sigma$ and $X^{1,N,\sigma}$ on the same probability space $(\Omega,\mathcal{B},\mathbb{P}_x)$. 
\begin{cor}
\label{cor2} Under Hypotheses (V1)--(V2) and (F1)--(F3), for any $x\in\bRb^d$, each element of the family $(X^{1,N,\sigma})_N$ converges in probability towards the diffusion $X^\sigma$ as $\sigma\to 0$, uniformly with respect to the parameter $N$. In particular let $\delta>0$, then for $N$ sufficiently large (resp. $\sigma$ small), there exists a constant $K_\delta (T)>0$ such that
\be
\PP\Big(\sup_{0\leq t\leq T}\Vert X_t^{1,N,\sigma}-X_t^{\sigma}\Vert\geq\delta\Big)\leq e^{-\frac{K_\delta(T)}{\sigma^2}}.
\ee
\end{cor}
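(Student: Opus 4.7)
The plan is to compare both processes to their common small-noise limit $\Psi_\infty^x$ via the triangle inequality
\[
\Bigl\{\sup_{0\le t\le T}\|X^{1,N,\sigma}_t-X^\sigma_t\|\ge\delta\Bigr\}\subset \mathcal{A}_N^\sigma\cup\mathcal{B}^\sigma,
\]
where $\mathcal{A}_N^\sigma:=\{\sup_t\|X^{1,N,\sigma}_t-\Psi_\infty^x(t)\|\ge\delta/2\}$ and $\mathcal{B}^\sigma:=\{\sup_t\|X^\sigma_t-\Psi_\infty^x(t)\|\ge\delta/2\}$. Both events correspond to trajectories landing in the closed subset
\[
\mathbb{F}_\delta:=\bigl\{\varphi\in\mathcal{C}([0,T];\mathbb{R}^d):\ \|\varphi-\Psi_\infty^x\|_\infty\ge\delta/2\bigr\},
\]
which puts the problem into a standard large-deviation framework.

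For $\mathcal{B}^\sigma$, I would invoke the LDP for the self-stabilizing diffusion with good rate function $J_\infty$. Since $\Psi_\infty^x$ is the unique zero of $J_\infty$ (read off from \eqref{eq:def:Jinf} and \eqref{Psi}) and $\mathbb{F}_\delta$ is closed, the goodness of $J_\infty$ forces $\alpha(\delta):=\inf_{\varphi\in\mathbb{F}_\delta}J_\infty(\varphi)>0$; the LDP upper bound then yields $\PP(\mathcal{B}^\sigma)\le\exp(-2\alpha(\delta)(1-o(1))/\sigma^2)$ as $\sigma\to 0$. For $\mathcal{A}_N^\sigma$, I would use the LDP for the first particle of \eqref{eq:intro:meanfield} obtained by contraction from the Freidlin--Wentzell LDP for the Kolmogorov diffusion \eqref{eq:kolmo}, whose good rate function is $J_N$. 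The essential step to synchronize the rates across $N$ is Corollary \ref{cor:inf-sup} applied to $\mathbb{F}_\delta$: it gives $\inf_{\mathbb{F}_\delta}J_N\to\alpha(\delta)$, so for $N\ge N_0(\delta)$ one has $\inf_{\mathbb{F}_\delta}J_N\ge \alpha(\delta)/2$. Setting $K_\delta(T):=\alpha(\delta)/4$ and combining the two exponential upper bounds produces the announced inequality.

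The delicate point, and the main obstacle, is the $N$-uniformity in the small-noise estimate for $\mathcal{A}_N^\sigma$: the LDP upper bound for fixed $N$ only guarantees $\PP(\mathcal{A}_N^\sigma)\le\exp\bigl(-2(\inf_{\mathbb{F}_\delta}J_N-\eta)/\sigma^2\bigr)$ for $\sigma\le\sigma_0(N,\eta)$, and a priori the threshold $\sigma_0$ depends on $N$. This is circumvented by exploiting the explicit Kolmogorov structure \eqref{eq:kolmo}: under {\rm (V1)--(V2)} and {\rm (F1)--(F3)} the drift $N\nabla\Upsilon^N$ is locally Lipschitz on any compact set with a constant independent of $N$ (the $1/N$ prefactor exactly balances the number of terms of the interaction sum). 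Standard exponential martingale / Girsanov estimates then deliver Freidlin--Wentzell type upper bounds with $N$-uniform constants, which together with the rate convergence provided by Corollary \ref{cor:inf-sup} produce a single pair $(N_0,\sigma_0)$ valid for all $N\ge N_0$ and $\sigma\le\sigma_0$, and the conclusion of the corollary follows.
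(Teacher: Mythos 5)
Your proposal is correct and follows essentially the same route as the paper: compare both processes to the deterministic limit $\Psi_\infty^x$ by a triangle inequality, bound each of the two resulting events by the LDP upper bound over a closed tube-complement set, and use Corollary \ref{cor:inf-sup} to guarantee that $\inf J_N$ over that set stays bounded away from zero for large $N$. Your additional discussion of the $N$-dependence of the small-noise threshold $\sigma_0(N,\eta)$ is a point the paper's proof silently glosses over (it simply asserts the bound ``for $N$ large and $\sigma$ small''), so flagging it and sketching the uniform Freidlin--Wentzell estimate is a genuine refinement rather than a deviation.
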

Let us just note that this result implies the convergence in distribution of the first particle -- first coordinate -- of the linear mean-field system  \eqref{eq:intro:meanfield} towards the self-stabilizing process. Combining Corollary \ref{cor:inf-sup} and Corollary \ref{cor2} leads to the following statement: for any closed set $\mathbb{F}$,
\[
\lim_{N\to\infty}\limsup_{\sigma\to 0}\frac{\sigma^2}{2}\, \log \mathbb{P}(X^{1,N,\sigma}\in\mathbb{F})\quad \mbox{and}\quad \limsup_{\sigma\to 0}\lim_{N\to\infty}\frac{\sigma^2}{2}\, \log \mathbb{P}(X^{1,N,\sigma}\in\mathbb{F})
\]
have the same upper bound namely: $\displaystyle-\inf_{\varphi\in\mathbb{F}}J_\infty(\varphi)$. 
A similar result holds for open sets $\mathbb{G}$, replacing both the limit inferior by the limit superior and the upper bound by a lower one.

A second remark: Large deviation principles can sometimes be proven directly by the use of coupling bounds. Nevertheless the coupling bound developed in Corollary \ref{cor2} is not strong enough for such implications. Indeed, the family $(X^{1,N,\sigma})_N$ is called an \emph{exponentially good approximation} of $X^\sigma$ if, for any $\delta>0$, 
\begin{equation}
\label{eq:expoapp}
\lim_{N\to\infty}\limsup_{\sigma\to 0}\sigma^2\log\mathbb{P}_x\Big( \sup_{0\le t\le T}\Vert X^{1,N,\sigma}_t-X^\sigma_t\Vert \ge \delta \Big)=-\infty
\end{equation}
(such a large deviation notion has been for instance developed in \cite{DZ}, Definition 4.2.14). For such approximations, the rate function of the limiting process (as $N$ tends to $\infty$) can be obtain as follows:
\[
J_\infty(\varphi):=\sup_{\delta>0}\liminf_{N\to\infty}\inf_{z\in B(\varphi,\delta)}J_N(z),
\]
where $\displaystyle B(\varphi,\delta)=\{z:\ \sup_{0\le t\le T}\Vert z(t)-\varphi(t)\Vert <\delta  \}$. In practice, \eqref{eq:expoapp} is quite difficult to obtain. Such technics were used in order to prove the Freidlin-Wentzell large deviation result for classical diffusions: the process is approximated by an other stochastic process with piecewise constant diffusion and drift terms (see Theorem 5.6.7 in \cite{DZ}). For the large deviation principle associated with the self-stabilizing diffusion developed in \cite{HIP}, an argument of exponentially good approximation is used but it does not concern the approximation of the non linear process by the first particle of the mean field linear system and therefore it does not use \eqref{eq:expoapp}.
\begin{proof}
By definition, the family of processes $\left(X^{\sigma}\right)_{\sigma>0}$ satisfies a large deviation principle associated with the good rate function $J_\infty$. So, for any closed subset $\bFb$ of $\cHc_x$, we have on one hand
\be
\limsup_{\sigma\to0}\frac{\sigma^2}{2}\log\left[\PP\left\{X^{\sigma}\in\mathbb{F}\right\}\right]\leq-\inf_{\varphi\in\bFb}J_\infty(\varphi)\,.
\ee
On the second hand, by Corollary \ref{cor:inf-sup} and for $N$ large enough, we obtain
\be
\limsup_{\sigma\to0}\frac{\sigma^2}{2}\log\left[\PP\left\{X^{1,N,\sigma}\in\mathbb{F}\right\}\right]\leq-\inf_{\varphi\in\bFb}J_N(\varphi)\leq-\frac{3}{4}\inf_{\varphi\in\bFb}J_\infty(\varphi)\,.
\ee
Introducing the particular subset:
\be
\bFb:=\Big\{\varphi\in\cHc_x:\,\,\exists t_0\in[0,T]\ \mbox{s.t.}\  \Vert\varphi(t_0)-\Psi_\infty^x(t_0)\Vert
\geq\frac{\delta}{4}\Big\}\,,
\ee
where $\Psi_\infty^x$ is defined in \eqref{Psi} that is
\(
\Psi_\infty^x(t):=x-\int_0^t\nabla V\left(\Psi_\infty^x(s)\right)ds\,,
\)
we observe that, for $N$ large and $\sigma$ small,
\ba
\PP\Big(\sup_{0\leq t\leq T}\Vert X_t^{1,N,\sigma}-X_t^{\sigma}\Vert\geq\delta\Big)&\leq\PP\left(X^{1,N,\sigma}\in\bFb\right)+\PP\left(X^{\sigma}\in\bFb\right)\leq e^{-\frac{K_\delta(T)}{\sigma^2}}\,,
\end{align*}
where $\displaystyle K_\delta(T):=C\inf_{\varphi\in\bFb}
J_\infty(\varphi)>0$ with $0<C<3/4$.
\end{proof}

In Theorem \ref{thm1}, we only deal with the \emph{gradient case} of the so-called McKean-Vlasov diffusion starting from initial position $x$. Let us now discuss a more general setting by considering the following nonlinear diffusion:
\be
Y_t^\sigma=x+\sigma B_t-\int_0^t\nabla V\left(Y_s^\sigma\right)ds-\int_0^t\int_{\bRb^d}\mathcal{A}
(Y_s^\sigma,y)\,\nu_s^\sigma(dy)\,ds-l(t).
\ee
Here $\mathcal{A}$ is a general two variables $\mathbb{R}^d$-valued function being a vector flow, non necessary gradient and $l$ is a $\cCc^1$-continuous function from $\bRb_+$ to $\bRb^d$. Finally the probability measure $\nu_s^\sigma$ stands for the distribution $\mathbb{P}_{Y_s^\sigma}$. The aim of this discussion does not concern the existence and uniqueness of such equation, so we assume that $V$, $\mathcal{A}$ and $l$ satisfy suitable conditions for the unique solution to exist. Then, it is possible to adapt the arguments developed in  \cite{HIP} in order to prove that  $\left(Y^\sigma\right)_{\sigma>0}$ satisfies a large deviation principle with the associated rate function:
\begin{equation}
\label{lost}
J_\infty(f):=\frac{1}{4}\int_0^T\Vert \poin{f}(t)+\nabla V(f(t))+\mathcal{A}(f(t),\Psi^x(t))+\poin{l}(t)\Vert^2\,dt
\end{equation}
for any function $f\in\mathcal{H}_{x}$ and $J_\infty(f):=+\infty$ otherwise. Here, the function $\Psi^x$ is defined as the unique solution of the ordinary differential equation:
\begin{equation*}
\Psi^x(t)=x-\int_0^t\nabla V\left(\Psi^x(s)\right)ds-\int_0^t\mathcal{A}(\Psi^x(s),\Psi^x(s))\,ds-l(t).
\end{equation*}
The stochastic model $(Y_t^\sigma)$ can also be approximated by a system of interacting particles. In this context, we can develop a statement similar to Theorem \ref{thm1}. The functional $J_\infty$ is effectively the limit as $N$ goes to infinity of the functional $J_N(f)$ defined by
\begin{equation}
\label{les4400}
\inf_{f_2,\ldots,f_N\in\mathcal{H}_x}\frac{1}{4}\sum_{i=1}^N\int_0^T\Vert \poin{f}_{i}(t)+\nabla V(f_i(t))+\frac{1}{N}\sum_{j=1}^N \mathcal{A}(f_{i}(t),f_{j}(t))+\poin{l}(t)\Vert^2\,dt
\end{equation}
with the convention $f_1=f$. Such a result can be proven under suitable assumptions: 
\begin{itemize}
\item the confining potential $V$ satisfies Hypotheses (V1)--(V2).
\item there exists a \emph{lower bounded} $\mathcal{C}^\infty$-function $\mathbb{A}:\mathbb{R}^d\times\mathbb{R}^d\to \mathbb{R}$ such that 
\be
\mathcal{A}(x,y)=\nabla_x\mathbb{A}(x,y)\quad\mbox{and}\quad \inf_{(x,y)\in\mathbb{R}^d\times\mathbb{R}^d}\mathbb{A}(x,y)>-\infty.
\ee
\item $\mathcal{A}$ satisfies a symmetry property: $\mathcal{A}(x,y)=-\mathcal{A}(y,x)$
\end{itemize}
The details of the proof are left to the reader, it suffices to apply the same arguments. Let us just note that the assumptions, just formulated, concerning $\mathcal{A}$  are sufficient in order to get the upper-bound \eqref{eq:bou}, a crucial step for proving the claimed statement.

We end this study pointing out an example of such a diffusion: 
\begin{equation*}
dX_t=\sigma dB_t-\left(\nabla W\left(X_t\right)-\overline{W}_t\right)dt-\poin{l}(t)dt,
\end{equation*}
where $\overline{W}_t:=\mathbb{E}\left\{\nabla W\left(X_t\right)\right\}$ and $W$ is such that the required conditions are satisfied. This equation actually corresponds to the hydrodynamic limit of an equation characterizing the charge and the discharge of the cathode in a lithium battery (see \cite{DGGHJ,DGH}). In such a framework, $\mathcal{A}(x,y):=\nabla W(x)-\nabla W(y)$ and $\mathbb{A}(x,y):=W(x)-\lra x\vert \nabla W(y)\rra$. Therefore, the rate function can be explicitly computed:
\begin{equation*}
J_\infty(f)=\frac{1}{4}\int_0^T\Vert \poin{f}(t)+\nabla W\left(f(t)\right)-\nabla W\left(x+l(t)\right)+\poin{l}(t)\Vert^2\,dt
\end{equation*}
and is obtained, as announced, as the limit for large $N$ of the rate function:
\[
\inf_{f_2,\ldots,f_N\in\mathcal{H}_x}\frac{1}{4}\sum_{i=1}^N\int_0^T\Vert \poin{f}_{i}(t)+\nabla W(f_i(t))-\frac{1}{N}\sum_{j=1}^N \nabla W(f_{j}(t))+\poin{l}(t)\Vert^2\,dt.
\]
\noindent{\bf Acknowledgements}\\ 
(S.H.) \emph{I would like to thank Dierk Peithmann for interesting discussions concerning this study.} \\ (J.T.): \emph{I would like to thank the Institut de Math\'ematiques de Bourgogne for the time spent there which permits the idea of this article to emerge.\\
I also thank Bielefeld University and especially Barbara  
Gentz.\\
Velika hvala Marini za sve. \'Egalement, un tr\`es grand merci \`a Manue et \`a Sandra pour tout.}

\begin{small}
\def\cprime{$'$}

\end{small}

\end{document}